\theoremstyle{plain}
\newtheorem{theorem}{Theorem}
\newtheorem{lemma}{Lemma}
\theoremstyle{definition}
\newtheorem{remark}{Remark}
\def\C{\mathcal C}
\title{Spherical Thrackles}
\author{Grant Cairns}
\author{Timothy J.~Koussas}
\author{Yuri Nikolayevsky}
\address{Dept of Mathematics and Statistics, La Trobe University, Melbourne, Australia 3086}
\email{G.Cairns@latrobe.edu.au}
\email{tkoussas@latrobe.edu.au}
\email{Y.Nikolayevsky@latrobe.edu.au}
\keywords{thrackle, graph drawing}
\subjclass[2010]{05C62,05C10}
\thanks{The second author was partially supported by the AMSI Vacation Research Scholarship}
\begin{document}

\maketitle

\begin{abstract} We establish Conway's thrackle conjecture in the case of spherical thrackles; that is, for drawings on the unit sphere where the edges  are  arcs of great circles.
\end{abstract}

\section{Introduction}

Let $G$ be a finite abstract graph. A \emph{thrackle} drawing  is a graph drawing of $G$ on some surface where every pair of distinct edges in $G$ intersects in a single point, either at a common endpoint or at a proper crossing; see \cite{CN,CN2,CN3,FP,LPS,PRT,PS,PP,W}. A \emph{spherical} thrackle drawing is a thrackle drawing  on the unit sphere where the edges  are represented by arcs of great circles.

The class of spherical thrackle drawings is a natural spherical analog of straight-line thrackles drawn on the plane. Despite the similarity, the graphs that can be drawn as spherical thrackles form a larger class than those that can be drawn as straight-line thrackles. Clearly, every graph that can be drawn as a straight-line thrackle can also be drawn as a (sufficiently small) spherical thrackle, but the converse is not true. By the results of Woodall \cite{W}, the only cycles which can be drawn as straight-line thrackles are the odd cycles. In comparison, all even cycles other than the $4$-cycle can be drawn as spherical thrackles; that is, every cycle that has a thrackle drawing in the plane also has a spherical thrackle drawing. Using an adaptation of Woodall's edge-insertion procedure for spherical thrackles \cite{W}, we can obtain from the $6$-cycle drawing the rest of the even cycle drawings, as demonstrated in Figure \ref{F1}.
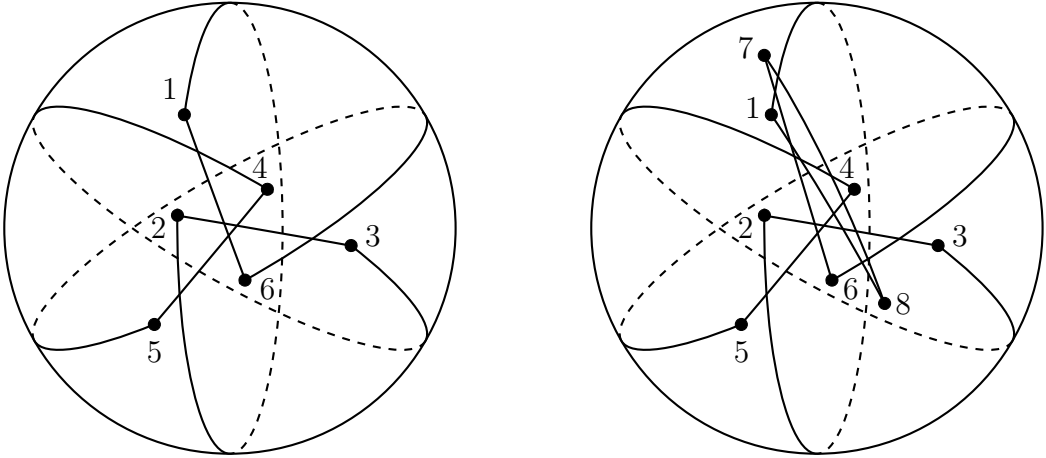
\begin{figure}[h!]
\begin{tikzpicture}
\draw [thick] (0,0) circle (3);
\draw [thick,style=dashed,rotate around={30:(0,0)}] (180:3 and 0.7) arc (180:0:3 and 0.7);
\draw [thick,rotate around={30:(0,0)}] (180:3 and 0.7) arc (180:240:3 and 0.7);
\draw [thick,rotate around={30:(0,0)}] (360:3 and 0.7) arc (360:265:3 and 0.7);
\draw [fill=black] (0.2,-0.69) circle (0.08);
\draw [fill=black] (-1.005,-1.28) circle (0.08);
\draw [thick,style=dashed,rotate around={150:(0,0)}] (180:3 and 0.7) arc (180:0:3 and 0.7);
\draw [thick,rotate around={150:(0,0)}] (180:3 and 0.7) arc (180:240:3 and 0.7);
\draw [thick,rotate around={150:(0,0)}] (360:3 and 0.7) arc (360:265:3 and 0.7);
\draw [fill=black,rotate around={120:(0,0)}] (0.2,-0.69) circle (0.08);
\draw [fill=black,rotate around={120:(0,0)}] (-1.005,-1.28) circle (0.08);
\draw [thick,style=dashed,rotate around={270:(0,0)}] (180:3 and 0.7) arc (180:0:3 and 0.7);
\draw [thick,rotate around={270:(0,0)}] (180:3 and 0.7) arc (180:240:3 and 0.7);
\draw [thick,rotate around={270:(0,0)}] (360:3 and 0.7) arc (360:265:3 and 0.7);
\draw [fill=black,rotate around={240:(0,0)}] (0.2,-0.69) circle (0.08);
\draw [fill=black,rotate around={240:(0,0)}] (-1.005,-1.28) circle (0.08);
\draw[thick,rotate around={-9.7:(0,0)}] (57:3 and 0.05) arc (57:105:3 and 0.05);
\draw[thick,rotate around={110.3:(0,0)}] (57:3 and 0.05) arc (57:105:3 and 0.05);
\draw[thick,rotate around={230.3:(0,0)}] (57:3 and 0.05) arc (57:105:3 and 0.05);
\draw (-0.8,1.85) node {$1$};
\draw (-0.95,0) node {$2$};
\draw (1.9,-0.1) node {$3$};
\draw (0.4,0.8) node {$4$};
\draw (-1,-1.65) node {$5$};
\draw (0.5,-0.8) node {$6$};
\end{tikzpicture}
\hspace{1.5cm}
\begin{tikzpicture}
\draw [thick] (0,0) circle (3);
\draw [thick,style=dashed,rotate around={30:(0,0)}] (-3,0) arc (180:0:3 and 0.7);
\draw [thick,rotate around={30:(0,0)}] (-3,0) arc (180:240:3 and 0.7);
\draw [thick,rotate around={30:(0,0)}] (3,0) arc (360:265:3 and 0.7);
\draw [fill=black] (0.2,-0.69) circle (0.08);
\draw [fill=black] (-1.005,-1.28) circle (0.08);
\draw [thick,style=dashed,rotate around={150:(0,0)}] (-3,0) arc (180:0:3 and 0.7);
\draw [thick,rotate around={150:(0,0)}] (-3,0) arc (180:240:3 and 0.7);
\draw [thick,rotate around={150:(0,0)}] (3,0) arc (360:265:3 and 0.7);
\draw [fill=black,rotate around={120:(0,0)}] (0.2,-0.69) circle (0.08);
\draw [fill=black,rotate around={120:(0,0)}] (-1.005,-1.28) circle (0.08);
\draw [thick,style=dashed,rotate around={270:(0,0)}] (-3,0) arc (180:0:3 and 0.7);
\draw [thick,rotate around={270:(0,0)}] (-3,0) arc (180:240:3 and 0.7);
\draw [thick,rotate around={270:(0,0)}] (3,0) arc (360:265:3 and 0.7);
\draw [fill=black,rotate around={240:(0,0)}] (0.2,-0.69) circle (0.08);
\draw [fill=black,rotate around={240:(0,0)}] (-1.005,-1.28) circle (0.08);
\draw[thick,rotate around={-9.7:(0,0)}] (57:3 and 0.05) arc (57:105:3 and 0.05);
\draw[thick,rotate around={230.3:(0,0)}] (57:3 and 0.05) arc (57:105:3 and 0.05);
\draw (-0.85,1.6) node {$1$};
\draw (-0.95,0) node {$2$};
\draw (1.9,-0.1) node {$3$};
\draw (0.4,0.8) node {$4$};
\draw (-1,-1.65) node {$5$};
\draw (0.45,-0.8) node {$6$};
\draw [fill=black] (0.9,-1.0) circle (0.08);
\draw [fill=black] (-0.7,2.3) circle (0.08);
\draw[thick,rotate around={16.8:(0,0)}] (126:0.01 and 3) arc (126:195:0.01 and 3);
\draw[thick,rotate around={30.65:(0,0)}] (-26:0.293 and 3) arc (-26:32:0.293 and 3);
\draw[thick,rotate around={23.7:(0,0)}] (-27:0.465 and 3) arc (-27:52:0.465 and 3);
\draw (-0.95,2.4) node {$7$};
\draw (1.15,-1) node {$8$};
\end{tikzpicture}

\caption{Spherical thrackle drawings of a $6$-cycle (left) and an $8$-cycle (right).}\label{F1}
\end{figure}

The main result of this paper is that Conway's thrackle conjecture (see \cite{W}) holds in the case of spherical thrackles.

\begin{theorem}\label{thcon}
Let $G$ be an abstract graph with $n$ vertices and $m$ edges. If $G$ admits a spherical thrackle drawing, then $n\geq m$.
\end{theorem}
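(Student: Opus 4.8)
The plan is to reduce Theorem~\ref{thcon} to a forbidden‑subgraph statement and then to settle that statement using the geometry of great circles.

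\emph{Reductions.} Since $n-m$ is additive over connected components and a tree component contributes $+1$, it suffices to prove $n'\ge m'$ for every connected graph $G'$ carrying a spherical thrackle drawing. Arguing by induction on $|V(G')|$, a vertex of degree $1$ together with its edge may be deleted (the restricted drawing is again a spherical thrackle, with $n',m'$ each dropping by one), so we may assume $\delta(G')\ge 2$. Then $2m'=\sum_v\deg v\ge 2n'$, so $m'\ge n'$ is automatic and the content is the reverse inequality, equivalently that $G'$ has at most one cycle. If not, the cyclomatic number $m'-n'+1$ is at least $2$, and then (suppress the degree‑two vertices and inspect the small cases) $G'$ contains, as a subgraph, one of: (i) two vertex‑disjoint cycles; (ii) two cycles meeting in exactly one vertex; (iii) a theta subgraph, that is, three internally disjoint paths joining two vertices. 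Each such subgraph inherits a spherical thrackle drawing, so it is enough to show that none of (i)--(iii) is spherically thrackleable.

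\emph{The geometry.} Each edge $e$ is a geodesic arc on a great circle $C_e$; if $C_e\ne C_f$ then $C_e\cap C_f$ is an antipodal pair $\{x,-x\}$, and the thrackle condition says exactly one of $x,-x$ lies on both arcs (when $e,f$ share an endpoint $w$ this reads: $-w$ is not on both arcs). Two facts make this usable. First, a spherical thrackle is a finite union of arcs, hence misses a point of $S^2$ and is an ordinary thrackle drawn in the plane; the classical fact that a thrackleable graph has no two vertex‑disjoint cycles then disposes of (i) (alternatively one argues directly, as for (ii) and (iii) below). Second, if a cycle $C$ of the thrackle is drawn as the closed geodesic polygon $\gamma$, any great circle $C_0$ in general position meets $\gamma$ transversally in an \emph{even} number of points, because $C_0$ separates $S^2$, while it meets an individual edge‑arc in $0$, $1$, or $2$ points according to how many of the two points of $C_0\cap C_e$ that arc contains.

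\emph{The core, and the main obstacle.} For (ii), I would take cycles $C_1,C_2$ meeting only at a vertex $v$, place $v$ at the north pole so that the four edges at $v$ run along meridians, two‑colour the complement of the closed curve $\gamma_1$ of $C_1$, and count colour changes along $\gamma_2$ (which crosses $\gamma_1$ once for each disjoint pair of edges) to determine on which side of $\gamma_1$ near $v$ the two $C_2$‑edges point; confronting this with the cyclic order of the four meridian‑arcs at $v$, and with the parity constraint above applied to the meridian carrying one of those edges, should leave no consistent case. Configuration (iii) would be treated the same way, carrying out the two‑colouring and meridian analysis at both branch vertices and using the three cycles obtained from pairs of the three paths; the parity relations among the path‑lengths so forced should be mutually inconsistent. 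Assembling these impossibilities shows $G'$ has at most one cycle, hence $m'\le n'$, and summing over components gives $n\ge m$. The reductions are routine; the crux, which I expect to be the main obstacle, is this geometric analysis: a general topological thrackle satisfies only $m\le 2n-3$, so the argument must genuinely exploit that the edges are great‑circle arcs, with carrying circles meeting in antipodal pairs, rather than arbitrary Jordan arcs, and the bookkeeping is complicated by the fact that an edge may be the \emph{major} arc of its great circle (as happens already in a thrackled triangle), so that its contribution to the crossing counts and its behaviour under the two‑colouring must be tracked separately from the minor‑arc case.
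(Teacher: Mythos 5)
Your reductions are sound: restricting to connected graphs with minimum degree at least $2$, observing that a counterexample would have cyclomatic number at least $2$ and hence contain two disjoint cycles, a figure-$8$, or a theta subgraph, and disposing of the disjoint-cycles case by projecting to the plane and citing the classical result. This parallels the paper, which likewise funnels everything into a figure-$8$ configuration (it does so via degree bounds and the uniqueness of the $3$-cycle rather than via the cyclomatic number, and its case analysis shows the figure-$8$ can always be taken to be an even cycle attached to a bad $3$-cycle at the vertex opposite its long edge). The problem is that for cases (ii) and (iii) you have written a plan, not a proof: the decisive steps are flagged as ``should leave no consistent case'' and ``should be mutually inconsistent,'' and no argument is actually carried out. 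This is not a routine verification you are entitled to defer. Your proposed mechanism --- two-colouring the complement of one cycle and counting parity of crossings with a great circle --- is essentially topological, and topological parity arguments alone cannot rule out a figure-$8$ made of an even cycle and an odd cycle (this is exactly the configuration that survives all such arguments for general thrackles; the cited result of Lov\'asz--Pach--Szegedy only forces the second cycle to be \emph{even} when it touches a $3$-cycle, it does not forbid it). You acknowledge that the major-arc/minor-arc distinction must enter, but you give no machinery that converts that metric information into a contradiction.

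For comparison, the paper's entire technical apparatus exists to close precisely this gap: it proves that every cycle of length at least $5$ is ``good'' (consistently oriented crossings at consecutive edges), that every even cycle must contain a long edge (an arc exceeding $\pi$), that in a good path two long edges are separated by an odd number of short edges (Lemma \ref{gpl}, whose proof is a genuinely delicate argument tracking the order of crossing points along the long edges), and hence that a good odd cycle has at most one long edge (Lemma \ref{gcy}). The figure-$8$ is then opened up into a good odd cycle that would need two long edges --- one from the bad $3$-cycle, one from the even cycle --- which is the contradiction. None of this, nor any substitute for it, appears in your proposal, so the proof is incomplete at its core.
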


Let us comment straight away that Theorem \ref{thcon} opens up an approach to the thrackle conjecture in the plane; given a thrackle in the plane one can transport it to the sphere by central projection, and then attempt to deform it to a thrackle whose edges are arcs of great circles. We have not been able to complete this program; the difficulty lies in controlling the process so that during the deformation the edges do not cross  any vertices.

\section{Definitions and assumptions}
Consider a spherical thrackle drawing of a graph $G$.
Throughout this paper the graph $G$ will be assumed to be connected and have no terminal edges. This is not restrictive, since the existence of any counterexample to the thrackle conjecture obviously implies the existence of a counterexample which is connected and has no terminal edges.

We define the \emph{crossing orientation} of any two directed edges $e$, $f$ in a similar manner to the vector cross product. To demonstrate this, in Figure \ref{F2} we have $\chi(e_3,e_1)=1$, while $\chi(e_2,e_4)=-1$. A similar definition applies for intersections at endpoints; in Figure \ref{F2} we have $\chi(e_1,e_2)=1$.  Note that in general we have $\chi(e,f)=-\chi(f,e)$.
\begin{figure}[h]
\begin{tikzpicture}
\draw [thick] (0,0) circle (3);
\draw [thick,rotate around={4.6:(0,0)}] (103:3 and 0.79) arc (103:125:3 and 0.79);
\draw [thick,rotate around={4.6:(0,0)},-latex] (60:3 and 0.79) arc (60:105:3 and 0.79);
\draw [thick,rotate around={-46.4:(0,0)}] (113:3 and 0.84) arc (113:128:3 and 0.84);
\draw [thick,rotate around={-46.4:(0,0)},-latex] (55:3 and 0.84) arc (55:115:3 and 0.84);
\draw [thick,rotate around={-20.4:(0,0)}] (258:3 and 0.21) arc (258:310:3 and 0.21);
\draw [thick,rotate around={-20.4:(0,0)},-latex] (230:3 and 0.21) arc (230:260:3 and 0.21);
\draw [thick,rotate around={48.8:(0,0)}] (258:3 and 0.71) arc (258:302:3 and 0.71);
\draw [thick,rotate around={48.8:(0,0)},-latex] (237:3 and 0.71) arc (237:260:3 and 0.71);
\draw [fill=black] (-0.8,1.8) circle (0.08);
\draw [fill=black] (-1.8,0.5) circle (0.08);
\draw [fill=black] (-0.6,-1.6) circle (0.08);
\draw [fill=black] (1.5,0.8) circle (0.08);
\draw [fill=black] (1.7,-0.8) circle (0.08);
\draw (0.3,-1.2) node {$e_1$};
\draw (-1.2,0.9) node {$e_2$};
\draw (-0.8,-0.2) node {$e_3$};
\draw (-0.1,1.6) node {$e_4$};
\end{tikzpicture}

\caption{A directed $4$-path.}\label{F2}
\end{figure}
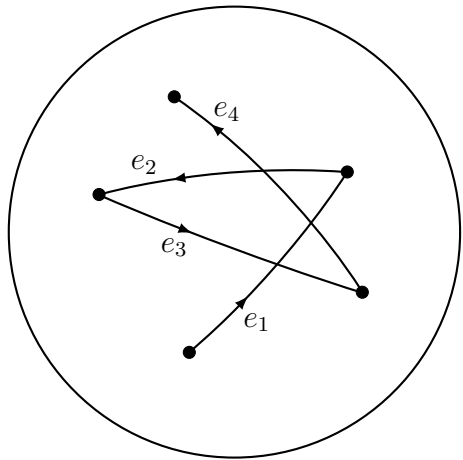

A (directed) $k$-path $p=e_1\dots e_k$ is called \emph{good} if either $\chi(e_{i-1},e_{i})=1$ for each $i=2,\dots,k$ or $\chi(e_{i-1},e_{i})=-1$ for each $i=2,\dots,k$, and is called \emph{bad} otherwise. Similarly, a $k$-cycle $c_k$ is called \emph{good} if every directed path in $c_k$ is good, and is called bad otherwise. The path shown in Figure \ref{F2} is good.

For any edge $e$, denote by ${\C}(e)$ the great circle containing $e$.
A \emph{long} (resp.~\emph{short}, resp.~\emph{medium}) edge is an edge whose length is greater than $\pi$ (resp.~less than $\pi$, resp.~equal to $\pi$).

For a given spherical thrackle drawing, by making small adjustments to the vertex positions if necessary,  we may assume  that no two edges lie on the same great circle. Hence the crossing orientations are all well-defined. Similarly, we may also assume that there are no medium edges, so every edge is either long or short. With these assumptions we say that the drawing is in \emph{general position}. Throughout this paper, the term \emph{spherical thrackle} will designate a spherical thrackle drawing in general position.

We will say that an edge $e$ \emph{lies in} a given (open) hemisphere $\mathcal{H}$ if the interior of $e$ is contained in $\mathcal{H}$; that is, we permit the vertices of $e$ to lie on the boundary of $\mathcal{H}$.
We require one further notion. Let $v\in G$ be a vertex of degree $\geq 3$ and let $e$ be an edge incident to $v$. For the purpose of illustration, suppose any other edges incident to $v$ are directed away from $v$. We say $e$ \emph{separates at} $v$ if and only if there are two other edges, $f,g$, incident to $v$  which start in opposite hemispheres bounded by ${\C}
(e)$. This is illustrated in Figure \ref{F4}. If it is understood which vertex is being referred to, or it is irrelevant, we simply say $e$ \emph{separates}.

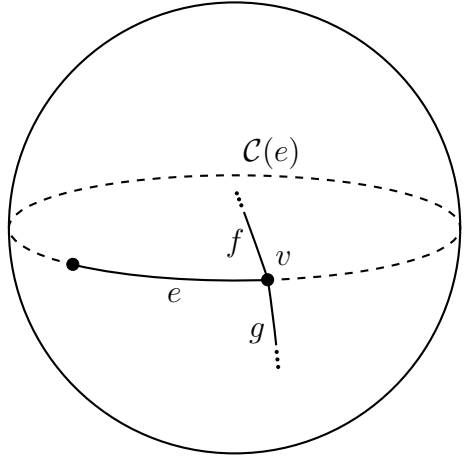
\begin{figure}[h!]
\begin{tikzpicture}
\draw [thick] (0,0) circle (3);
\draw [thick] (280:3 and 0.7) arc (280:225:3 and 0.7);
\draw [thick,style=dashed] (225:3 and 0.7) arc (225:-80:3 and 0.7);
\draw [fill=black] (0.44,-0.69) circle (0.08);
\draw [fill=black] (.09,0.3) circle (0.02);
\draw [fill=black] (.06,0.38) circle (0.02);
\draw [fill=black] (.03,0.46) circle (0.02);
\draw [fill=black] (.56,-1.65) circle (0.02);
\draw [fill=black] (.57,-1.75) circle (0.02);
\draw [fill=black] (.58,-1.85) circle (0.02);
\draw [thick,rotate around={110:(0,0)}] (255:3 and 0.19) arc (255:273:3 and 0.19);
\draw [fill=black] (-2.15,-0.485) circle (0.08);
\draw [thick,rotate around={-80:(0,0)}] (75:3 and 0.327) arc (75:57:3 and 0.327);
\draw (0.01,-0.2) node {$f$};
\draw (0.3,-1.4) node {$g$};
\draw (0.65,-0.4) node {$v$};
\draw (-0.8,-0.9) node {$e$};
\draw (0.5,1) node {${\C}
(e)$};
\end{tikzpicture}

\caption{The edge $e$ separates at $v$.}\label{F4}
\end{figure}

\section{Cycles}

\begin{theorem}\label{cth} Every spherical thrackle drawing of an $n$-cycle is good for $n\geq5$. Moreover, if $n$ is even, such a drawing contains at least one long edge.
\end{theorem}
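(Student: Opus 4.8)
The plan is to prove the two assertions separately, reducing the first to a statement about $3$-paths. A directed path is good exactly when it contains no bad $3$-subpath $efg$, i.e.\ no three consecutive edges with $\chi(e,f)\neq\chi(f,g)$, and since overlapping good $3$-paths force $\chi(e_i,e_{i+1})$ to take a common value at every vertex of a cycle, an $n$-cycle $c_n$ (with $n\geq4$) is good if and only if it contains no bad $3$-path. So the first assertion is equivalent to: a spherical thrackle drawing of $C_n$ with $n\geq5$ has no bad $3$-path $e_{i-1}e_ie_{i+1}$. This is where $n\geq5$ is used: then $e_{i-1}$ and $e_{i+1}$ are non-adjacent, hence cross properly, and moreover $e_{i+2}$ (and, symmetrically, $e_{i-2}$) is non-adjacent to both $e_{i-1}$ and $e_i$ and so crosses each of them; $C_4$ has no such extra crossing partner.

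To rule out a bad $3$-path I would argue by contradiction after normalising: rotate the sphere so that the middle edge $e_i$ lies on the equator, directed eastwards, with its tail at a fixed equatorial point, so that $\C(e_i)$ is the equator with distinguished pole the north pole $N$. Writing $n_e$ for the distinguished pole of the directed great circle $\C(e)$, one has $\chi(e,f)=\operatorname{sgn}\det(n_e,n_f,q)$ at any intersection point $q$ of $e$ and $f$; evaluating this at the two ends of $e_i$ turns $\chi(e_{i-1},e_i)\neq\chi(e_i,e_{i+1})$ into a sign condition relating $n_{i-1}$, $n_{i+1}$, $N$ and the two endpoints of $e_i$. The task is then to show this is incompatible with the remaining thrackle conditions: $e_{i-1}$ and $e_{i+1}$ must meet \emph{inside their arcs}, not merely along their great circles, and the arc $e_{i+2}$ must cross both $e_{i-1}$ and $e_i$; these should pin the far endpoints $v_{i-2},v_{i+1}$ and the arc $e_{i+2}$ into mutually inconsistent positions relative to the equator and to $\C(e_{i-1}),\C(e_{i+1})$. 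I expect this to reduce to a finite case check according to which of $e_{i-2},e_{i-1},e_i,e_{i+1},e_{i+2}$ are long and which short, and \textbf{this case analysis is the main obstacle}: a long edge wraps more than halfway around its great circle, so the hemisphere bookkeeping must be done carefully, and the argument must be uniform in $n\geq5$.

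Granting the first assertion, consider the even case. Orient $C_n$ so that $\chi(e_i,e_{i+1})=1$ for all $i$. The shared vertex $v_i$ lies on $\C(e_i)$ and $\C(e_{i+1})$, hence is proportional to $n_i\times n_{i+1}$, and $\chi(e_i,e_{i+1})=1$ forces $v_i=(n_i\times n_{i+1})/\lVert n_i\times n_{i+1}\rVert$; identifying which of the two arcs between $v_{i-1}$ and $v_i$ is the edge $e_i$ then shows that $e_i$ is long exactly when the ``polar polygon'' $n_1n_2\cdots n_n$ turns in one fixed direction at $n_i$. Thus $C_n$ has no long edge iff this polar polygon turns consistently at every vertex. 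The last step is to show that this, combined with the thrackle condition rewritten in terms of the $n_i$, forces $n$ to be odd. One way: show that then the vertices $v_i$ lie in an open hemisphere, gnomonically project from its pole so that every short great-circle arc becomes a straight segment, obtaining a straight-line thrackle drawing of $C_n$ in the plane, and invoke Woodall's theorem \cite{W} that only odd cycles have such drawings; another way is a direct parity count, of musquash type, on the cyclic order in which the edges meet one another. \textbf{Establishing the hemisphere confinement, or setting up the parity count so that the second (``far'') antipodal intersection point of each pair of great circles is correctly accounted for, is the delicate point of this half.}
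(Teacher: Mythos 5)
Your proposal is a plan rather than a proof: in both halves the decisive step is explicitly deferred (``this case analysis is the main obstacle'', ``establishing the hemisphere confinement \dots is the delicate point''), and those are precisely the points where the actual content of the theorem lives. For the first assertion, your reduction to excluding a bad $3$-subpath is fine, but the pole/determinant normalisation and the anticipated case check over which of $e_{i-2},\dots,e_{i+2}$ are long is never carried out, and it is not clear it would close. The paper's argument is much more direct: if $e_{j-1}e_je_{j+1}$ is bad, then $e_j$ must be \emph{short} (otherwise $e_{j-1}$ and $e_{j+1}$ would start in opposite open hemispheres bounded by $\mathcal{C}(e_j)$ and, being confined there, could never meet), and at least one of $e_{j-1},e_{j+1}$ must be \emph{long} for the same hemisphere reason; in the resulting configuration the edge $e_{j-2}$, which for $n\geq5$ is non-adjacent to both $e_j$ and $e_{j+1}$ and so must cross each of them, is forced through their common endpoint, creating a $3$-cycle inside an $n$-cycle --- a contradiction. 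You never identify this mechanism (shortness of the middle edge, longness of a neighbour, and the forced passage of $e_{j\pm2}$ through a vertex), so the first half is genuinely incomplete.

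The second half has the same problem. Your gnomonic-projection route requires all vertices of an all-short-edge even cycle to lie in one open hemisphere, which you do not establish and which is not obviously true; your alternative ``parity count of musquash type'' is only named, not set up. The paper's parity argument is simpler and does not need hemisphere confinement of the whole cycle: assuming every edge is short, each edge $e_i$ with $3\le i\le 2n-1$ crosses the single great circle $\mathcal{C}(e_1)$ exactly once, so its endpoints lie in opposite hemispheres bounded by $\mathcal{C}(e_1)$; starting from $e_2$ lying in $\mathcal{H}$ and using goodness (already proved) to place the starting point of $e_{2n}$ in $\mathcal{H}$ as well, induction shows odd-indexed edges end in $-\mathcal{H}$, so $e_{2n-1}$ cannot end at the starting point of $e_{2n}$. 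If you want to salvage your draft, the one fixed great circle $\mathcal{C}(e_1)$ --- not the polar polygon of all the $n_i$ --- is the bookkeeping device you need.
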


\begin{proof} Let $c_n=e_1e_2\dots e_n$ be an $n$-cycle for some $n\geq 5$. For convenience of notation, let us write $e_k=e_{k+n}$ for all $k$, and choose the direction on $c_n$ in order of increasing edge index.

Suppose $c_n$ is bad. We can assume without loss of generality that there are three adjacent edges $e_{j-1}, e_j, e_{j-1}$ such that $\chi(e_{j-1},e_{j})=1$ and $\chi(e_{j},e_{j+1})=-1$. Then $e_j$ is short; indeed, otherwise $e_{j-1}$ and $e_{j+1}$ will be forced to lie in opposite hemispheres bounded by ${\C}
(e_j)$ and hence could not intersect. We must also have at least one of $e_{j-1}$ and $e_{j+1}$ long, for the same reason.
Up to relabelling and direction change, we have the structure shown in Figure \ref{F3}, with $e_{j+1}$ possibly long. We see the edge $e_{j-2}$ incident to $e_{j-1}$ at its starting point must meet $e_{j}$ and $e_{j+1}$ at their common endpoint in order to intersect them both, and this produces a $3$-cycle, which is a contradiction. Hence, $c_n$ is good.

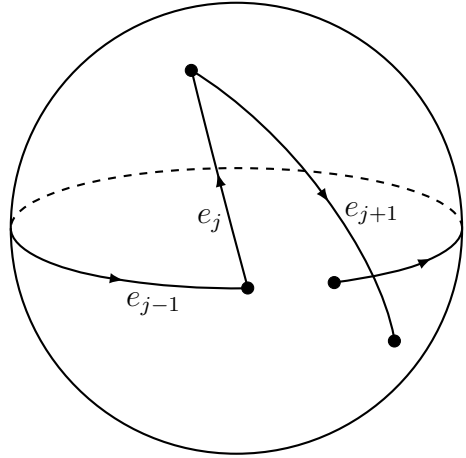
\begin{figure}
\begin{tikzpicture}
\draw [thick] (0,0) circle (3);
\draw [thick] (238:3 and 0.8) arc (238:273:3 and 0.8);
\draw [thick,-latex] (180:3 and 0.8) arc (180:240:3 and 0.8);
\draw [thick,style=dashed] (0:3 and 0.8) arc (0:180:3 and 0.8);
\draw [thick] (328:3 and 0.8) arc (328:360:3 and 0.8);
\draw [thick,-latex] (295:3 and 0.8) arc (295:330:3 and 0.8);
\draw [thick,rotate around={14.8:(0,0)}] (167:0.058 and 3) arc (167:135:0.058 and 3);
\draw [thick,rotate around={14.8:(0,0)},-latex] (195:0.058 and 3) arc (195:165:0.058 and 3);
\draw [thick,rotate around={39.8:(0,0)}] (-8:1.185 and 3) arc (-8:-55:1.185 and 3);
\draw [thick,rotate around={39.8:(0,0)},-latex] (42:1.185 and 3) arc (42:-10:1.185 and 3);
\draw [fill=black] (0.15,-0.795) circle (0.08);
\draw [fill=black] (-0.6,2.1) circle (0.08);
\draw [fill=black] (2.1,-1.5) circle (0.08);
\draw [fill=black] (1.3,-0.723) circle (0.08);
\draw (-1.1,-1.0) node {$e_{j-1}$};
\draw (-0.35,0.1) node {$e_j$};
\draw (1.8,0.2) node {$e_{j+1}$};
\end{tikzpicture}

\caption{A bad $3$-path.}\label{F3}
\end{figure}

Now, let $c_{2n}=e_1\dots e_{2n}$ be a $2n$-cycle for some $n\geq3$, directed in order of increasing edge index. Suppose that all edges in $c_{2n}$ are short. Denote by $\mathcal{H}$ the hemisphere bounded by ${\C}
(e_1)$ in which the edge $e_2$ lies. As $c_{2n}$ is good, the starting point of $e_{2n}$ is also in $\mathcal{H}$. As $e_3$ is short and begins in $\mathcal{H}$, it ends in the other hemisphere $-\mathcal{H}$ (since it must cross $e_1$ and hence ${\C}
(e_1)$). By induction,  each even-numbered edge (other than $e_{2n}$) ends in $\mathcal{H}$, while each odd-numbered edge (other than $e_1$) ends in $-\mathcal{H}$. But $e_{2n-1}$ must end in $\mathcal{H}$ in order to meet the starting point of $e_{2n}$, so we have a contradiction. Hence, $c_{2n}$ contains a long edge.
\end{proof}

\begin{remark}\label{R:bad3cycle}
Notice that since we will be dealing with graphs without terminal edges, the proof of Theorem \ref{cth} also establishes the following fact: for every bad $3$-path, the middle edge is short and is an edge in a bad $3$-cycle. Moreover, every bad $3$-cycle has a unique long edge; this is the edge whose vertices have the same crossing orientation. These observations will be useful in what follows.
\end{remark}

\section{Lemmata}

Consider a spherical thrackle drawing of a connected graph $G$  having no terminal edges.

\begin{lemma}\label{sel}
In any spherical thrackle, every edge that separates at one of its vertices is short and is an edge of a bad $3$-cycle.
\end{lemma}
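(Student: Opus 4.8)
The plan is to argue directly from the definition of separation. Suppose $e$ is an edge that separates at a vertex $v$ of degree $\geq 3$. By definition there are two further edges $f, g$ incident to $v$, directed away from $v$, whose initial arcs lie in opposite open hemispheres bounded by $\C(e)$. Direct $e$ away from $v$ as well, so that $e, f, g$ all emanate from $v$. The first task is to show $e$ is short. If $e$ were long, then $e$ itself would pass through both hemispheres bounded by $\C(e)$... but that statement needs care, since $\C(e)$ is the circle containing $e$, so $e$ cannot lie "on both sides." The correct observation is the following: I would look at the two edges $f$ and $g$, and use that each of them must cross \emph{every} other edge not sharing the endpoint $v$. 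In particular consider any edge $h$ adjacent to $e$ at its \emph{other} endpoint $w$ (such an edge exists because $G$ has no terminal edges, so $\deg(w) \geq 1$, and in fact since $G$ is connected with no terminal edges every vertex has degree $\geq 2$; pick $h \neq e$ incident to $w$). The edge $h$ must cross both $f$ and $g$. Since $f$ starts in one open hemisphere of $\C(e)$ and $g$ starts in the other, and both end at points that must be reached by $h$, I expect to extract a constraint forcing $e$ to be short: if $e$ were long, the hemispheres bounded by $\C(e)$ together with the position of $w$ on $\C(e)$ would trap $h$ on one side and prevent it from meeting one of $f, g$.

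Next, to produce the bad $3$-cycle: the edges $f$ and $g$ leave $v$ into opposite hemispheres of $\C(e)$, so the $2$-path $f^{-1} g$ (traversing $f$ backwards into $v$, then $g$ out) has $\chi$-values of opposite sign at $v$ — that is, $\{f, g\}$ together with the crossing structure at $v$ forms the germ of a bad path. More precisely, I would show that $e, f, g$ (or some orientation of them) realize the configuration of Figure \ref{F3}: $e$ is the short middle edge of a bad $3$-path, with $f$ and $g$ playing the roles of the outer edges $e_{j-1}$ and $e_{j+1}$, one of which is long. Then Remark \ref{R:bad3cycle} — which says the middle edge of any bad $3$-path is short and belongs to a bad $3$-cycle — applies directly and finishes the proof. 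So the real content is verifying that "separates at $v$" forces exactly the bad $3$-path picture: that $\chi(f, e)$ and $\chi(g, e)$ have opposite signs (equivalently $f, g$ start on opposite sides of $\C(e)$, which is the hypothesis) is the key sign computation, combined with the shortness of $e$ just established.

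The main obstacle I anticipate is the shortness argument — ruling out a long separating edge. The subtlety is that $\C(e) = \C(e^{\text{rev}})$ depends only on the great circle, not on the length of $e$, so being long is a statement about the arc $e$ occupying more than a hemisphere's worth of that circle, and one must convert "$f, g$ start on opposite sides" into a contradiction with thrackle-ness when $e$ is long. I would handle this by choosing a neighbour edge $h$ at the far endpoint $w$ of $e$ as above and tracking which hemisphere $h$'s interior lies in: because $e$ is long, $w$ and $v$ are positioned on $\C(e)$ in a way that, combined with $h$ having to cross both $f$ and $g$, over-determines the position of $h$. Once shortness is in hand, the reduction to Remark \ref{R:bad3cycle} via Figure \ref{F3} should be routine, essentially a relabelling. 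A secondary point to be careful about is that the definition of "separates" only posits edges $f, g$ starting in opposite hemispheres; I must make sure the $3$-path I build from $e, f, g$ is genuinely bad and genuinely a path (i.e.\ $f \neq g$, which is given, and that no degeneracy collapses it), but this is immediate from general position.
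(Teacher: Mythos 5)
Your overall strategy --- exhibit $e$ as the middle edge of a bad $3$-path and then invoke Remark \ref{R:bad3cycle} --- is exactly the paper's, but the $3$-path you propose does not exist. You want $f$ and $g$ to play the roles of the outer edges $e_{j-1}$ and $e_{j+1}$ of Figure \ref{F3}, with $e$ in the middle; however, in a $3$-path the two outer edges meet the middle edge at \emph{opposite} endpoints, whereas $f$ and $g$ are both incident to $e$ at the same vertex $v$. So $feg$ is not a path, and the ``relabelling'' you describe as routine cannot be carried out. The fix is the one ingredient you introduced for a different purpose and then set aside: an edge $h$ at the far endpoint $w$ of $e$ (which exists because $G$ has no terminal edges). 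Direct $f,g$ into $v$ and $e$ out of $v$; the hypothesis that $f$ and $g$ start in opposite hemispheres bounded by $\C(e)$ says precisely that $\chi(f,e)$ and $\chi(g,e)$ have opposite signs. Then whatever the sign of $\chi(e,h)$, it disagrees with one of them, so one of $feh$, $geh$ is a bad $3$-path with middle edge $e$, and Remark \ref{R:bad3cycle} finishes the proof.

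A second issue is the order of your argument. You try to establish shortness of $e$ first, by a direct geometric argument (``I expect to extract a constraint\dots'', ``over-determines the position of $h$'') that is never actually carried out and would need real work to make precise. That effort is unnecessary: Remark \ref{R:bad3cycle} already asserts that the middle edge of any bad $3$-path is short \emph{and} lies in a bad $3$-cycle, so shortness comes for free once the bad $3$-path is correctly constructed, with no separate case analysis on the length of $e$. As written, your proposal has a genuine gap in both halves: the shortness argument is a sketch of an intention rather than a proof, and the bad-$3$-cycle argument rests on a configuration that is not a path.
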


\begin{proof} Assume that $e$ separates at  $v$ with adjacent edges $f,g$ in different hemispheres. Direct $f,g$ towards $v$ and $e$ away from $v$, and assume without loss of generality that $\chi(f,e)=-1$ and $\chi(g,e)=1$,  as shown in Figure \ref{F4}. Since $G$ has no terminal edges, there is some edge $h$ incident to $e$ at its other endpoint. We have either $\chi(e,h)=1$ or $\chi(e,h)=-1$. If $\chi(e,h)=1$, then $feh$ is a bad $3$-path, and if $\chi(e,h)=-1$, then $geh$ is a bad $3$-path, so in either case $e$ is the middle edge of a bad $3$-path. So by Remark \ref{R:bad3cycle}, $e$ is short and is an edge in a bad $3$-cycle.
\end{proof}

\begin{lemma}\label{vl} Let $G$ be a connected graph with no terminal edges. If $\deg(v)\geq3$, then there exists a great circle ${\C}
$ passing through $v$ such that the starting segments of all of the edges incident to $v$ lie in the same hemisphere bounded by ${\C}
$.
\end{lemma}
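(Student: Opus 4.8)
The plan is to translate the statement into the tangent plane at $v$ and then invoke Lemma~\ref{sel}. Direct every edge incident to $v$ away from $v$, and for each such edge $e_i$ ($1\le i\le k:=\deg(v)$) let $d_i\in T_v$ be its unit initial direction, where $T_v$ denotes the tangent plane to $S^2$ at $v$. Since the drawing is in general position no two edges lie on a common great circle, and two distinct edges at $v$ with parallel or antiparallel initial directions would both lie on the unique great circle through $v$ with that tangent line; hence $d_1,\dots,d_k$ are distinct with no two antipodal. It now suffices to produce a unit vector $u\in T_v$ with $\langle d_i,u\rangle>0$ for every $i$: taking $\C=\{x\in S^2:\langle x,u\rangle=0\}$, a great circle through $v$, and writing $e_i$ near $v$ as $t\mapsto\cos t\,v+\sin t\,d_i$, we find $\langle\,\cdot\,,u\rangle=\sin t\,\langle d_i,u\rangle>0$ along the starting segment of $e_i$, so all starting segments lie in the open hemisphere $\{x\in S^2:\langle x,u\rangle>0\}$. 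Such a $u$ exists precisely when $0\notin\operatorname{conv}\{d_1,\dots,d_k\}$; so, supposing to the contrary that $0$ lies in this hull, Carath\'eodory's theorem---together with the observation that $0$ cannot lie on the segment joining two of the $d_i$, since that would force them antipodal---provides three of the vectors, say $d_1,d_2,d_3$, with $0$ in the interior of the triangle they span.

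It remains to rule out such a triple. If $0$ lies in the interior of the triangle $d_1d_2d_3$, then for each $i$ the line $\mathbb{R}d_i$---the tangent line to $\C(e_i)$ at $v$---strictly separates the other two directions, so $e_i$ separates at $v$ via the other two edges. By Lemma~\ref{sel} each of $e_1,e_2,e_3$ is then short and lies on a bad $3$-cycle; what I would extract next, by re-running the argument in the proof of Theorem~\ref{cth}, is the precise shape of that cycle. As in the proof of Lemma~\ref{sel}, $e_i$ is the middle edge of a bad $3$-path whose first edge is one of the other two $e_j$ and whose last edge is some edge $h_i$ incident to the far endpoint $w_i$ of $e_i$. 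Since $G$ has no terminal edges, the far endpoint $w_j$ of $e_j$ carries an edge other than $e_j$, and---exactly as in the proof of Theorem~\ref{cth}, where such an edge is forced to meet the last two edges of the bad path at their common endpoint---this edge must pass through $w_i$. Hence there is an edge $c_i$ joining $w_i$ and $w_j$ (these far endpoints are distinct, as two edges at $v$ with the same other endpoint would meet twice), the triple $\{e_i,e_j,c_i\}$ is a bad $3$-cycle on the vertices $v,w_i,w_j$, and by Remark~\ref{R:bad3cycle} its unique long edge is $c_i$, since $e_i$ and $e_j$ are short. I expect this edge-forcing step to be the crux: one cannot merely cite ``$e_i$ lies on a bad $3$-cycle'', but must reproduce the configuration analysis of Theorem~\ref{cth} to see that the cycle passes through $v$ and has its long edge joining the two far endpoints.

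Finally, the long edges $c_1,c_2,c_3$ obtained in this way each join two of the three points $w_1,w_2,w_3$ and between them are incident to all three; hence two of them, say $c$ and $c'$, are distinct edges sharing a common endpoint $z\in\{w_1,w_2,w_3\}$. But a long edge has length strictly between $\pi$ and $2\pi$, so an edge of length greater than $\pi$ issuing from $z$ contains the antipode $-z$ in its interior; thus $c$ and $c'$ both contain $z$ and $-z$ and so meet in at least two points, contradicting the thrackle condition. This contradiction shows that $0\notin\operatorname{conv}\{d_1,\dots,d_k\}$, and the great circle $\C$ built as in the first paragraph from the resulting separating vector $u$ has the required property.
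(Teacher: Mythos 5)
Your first two reductions are correct and amount to a careful expansion of the paper's one-line observation: if no such great circle exists then $0$ lies in the convex hull of the tangent directions, and (via Carath\'eodory and the non-antipodality coming from general position) three of the edges at $v$, say $e_1,e_2,e_3$, must each separate at $v$, hence by Lemma~\ref{sel} each is short and lies in a bad $3$-cycle. The gap is in the step you yourself flag as the crux. In the proof of Theorem~\ref{cth} the bad $3$-path is first normalised, ``up to relabelling and direction change,'' so that its \emph{long} outer edge comes first; the edge that gets forced through the common endpoint of the other two edges is the one attached to the far endpoint of that \emph{long} edge. In your bad $3$-path $e_je_ih_i$ both $e_j$ and $e_i$ are short, so the long outer edge is necessarily $h_i$; reversing the path to match Figure~\ref{F3}, the forced edge is the one at the far endpoint $u_i$ of $h_i$, and it is forced through $v$ (the common endpoint of $e_i$ and $e_j$), not through $w_i$. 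The resulting bad $3$-cycle is $\{e_i,\,h_i,\,u_iv\}$ on the vertices $v,w_i,u_i$, with long edge $h_i=w_iu_i$ --- not a cycle on $v,w_i,w_j$ with long edge $w_iw_j$. An edge attached at $w_j$, the far end of the \emph{short} first edge, is not forced through $w_i$: it can cross $e_i$ at an interior point of $\C(e_i)$ and meet the long $h_i$ elsewhere. Consequently your three long edges need not have their endpoints among $\{w_1,w_2,w_3\}$, two of them need not be adjacent, and the final ``two long edges sharing a vertex'' contradiction does not follow.

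The repair is to ask for less structure, which is what the paper does: each of the three separating edges lies in \emph{some} bad $3$-cycle by Lemma~\ref{sel}, a single $3$-cycle can contain at most two edges incident to $v$, so there are at least two distinct $3$-cycles, contradicting the fact (cited from \cite{CN}) that a thrackle contains at most one $3$-cycle. If you want to avoid invoking that external fact, you would need to push the corrected structural analysis further (e.g.\ tracking the new edges $u_iv$ at $v$), which your current argument does not do.
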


\begin{proof} If no such circle ${\C}
$ exists, then all of the edges incident to $v$ must separate; since there are at least three of them, this gives at least two different $3$-cycles. But this is impossible as thrackles can have at most one $3$-cycle \cite{CN}.
\end{proof}

\begin{lemma}\label{corollary} For any vertex $v$ we have  $\deg(v)\leq4$. Moreover, if $\deg(v)>2$, then $v$ is a vertex of a bad $3$-cycle.
\end{lemma}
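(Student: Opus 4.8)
\emph{Proof proposal.} The plan is to read off the local picture at $v$ from Lemma~\ref{vl}, recognise most of the edges at $v$ as separating edges, and then apply Lemma~\ref{sel} together with the fact (used already in the proof of Lemma~\ref{vl}, from \cite{CN}) that a thrackle has at most one $3$-cycle. First I would dispose of the case $\deg(v)\le 2$, where both assertions are vacuous or trivial. So assume $d:=\deg(v)\ge 3$ and apply Lemma~\ref{vl} to get a great circle $\C$ through $v$ such that the starting segments of all $d$ edges at $v$ lie in one (open) hemisphere bounded by $\C$. Passing to the tangent plane of the sphere at $v$, this says that the outgoing directions of the $d$ edges at $v$ all lie in one open half-plane $P$ bounded by the tangent line to $\C$ at $v$; in particular these directions are pairwise distinct and pairwise non-antipodal, so they inherit a genuine angular order, and I label the edges $e_1,e_2,\dots,e_d$ accordingly.

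The key observation is that for every index $i$ with $1<i<d$, the edge $e_i$ separates at $v$. Indeed, $\C(e_i)$ passes through $v$ and its tangent line at $v$ is the line spanned by the direction of $e_i$; since within $P$ the directions of $e_1$ and $e_d$ lie strictly on opposite sides of the direction of $e_i$, the edges $e_1$ and $e_d$ start in opposite hemispheres bounded by $\C(e_i)$, which is exactly what it means for $e_i$ to separate at $v$. By Lemma~\ref{sel}, each of $e_2,\dots,e_{d-1}$ is then an edge of a bad $3$-cycle, and since a thrackle contains at most one $3$-cycle \cite{CN}, all of $e_2,\dots,e_{d-1}$ are edges of one and the same bad $3$-cycle $C$.

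Now I would simply count. The cycle $C$ has only three edges, and at most two of them can be incident to the single vertex $v$; since $e_2,\dots,e_{d-1}$ are $d-2$ distinct edges of $C$ all incident to $v$, we get $d-2\le 2$, i.e.\ $\deg(v)\le 4$. For the ``moreover'' part, when $d\ge 3$ the index $i=2$ is available, so $e_2$ is a separating edge at $v$ lying on $C$; an edge of a $3$-cycle that is incident to $v$ forces $v$ to be one of the three vertices of that cycle, so $v$ is a vertex of the bad $3$-cycle $C$.

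The only step I expect to require care is the ``key observation'': making fully rigorous, using the general-position hypothesis and Lemma~\ref{vl}, that the $d$ outgoing directions at $v$ genuinely lie in an \emph{open} half-plane, hence are angularly ordered with no wrap-around, so that ``strictly between in angular order at $v$'' really does translate into ``$e_1$ and $e_d$ start in opposite hemispheres bounded by $\C(e_i)$''. Once that is pinned down, everything else is bookkeeping on top of Lemmata~\ref{vl} and \ref{sel} and the uniqueness of the $3$-cycle.
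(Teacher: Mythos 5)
Your proof is correct and takes essentially the same route as the paper: Lemma~\ref{vl} supplies the half-plane of outgoing directions at $v$, the angularly interior edges are seen to separate, Lemma~\ref{sel} places each of them on the (unique) bad $3$-cycle, and counting the edges of a $3$-cycle incident to $v$ yields $\deg(v)\le 4$ and the ``moreover'' clause. The only difference is presentational: you spell out the ``middle edges separate'' step that the paper leaves implicit in its appeal to Lemma~\ref{vl}.
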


\begin{proof} By Lemma \ref{vl}, if we have a vertex of degree $5$, then at least three of the adjacent edges are separating edges; this gives at least two different $3$-cycles by Lemma \ref{sel}, which is impossible. Hence, $\deg(v)\leq4$ for any vertex $v$. If $\deg(v)>2$, we have at least one separating edge, which must be an edge of a bad $3$-cycle by Lemma \ref{sel}, so $v$ is a vertex of a bad $3$-cycle.
\end{proof}

\begin{lemma}\label{l:2long}{\ }
\begin{enumerate}[{\rm (a)}]
\item No two long edges are adjacent.
\item  All the edges adjacent to a long edge $e$  lie in the
same open hemisphere bounded by ${\C}(e)$.
\item  If $e_1e_2e_3$ is a directed $3$-path, with the edge $e_2$ long, then
the orientations of crossings of $e_2$ and $e_3$ with $e_1$ are the same:
$\chi (e_1,e_2)=\chi (e_1,e_3)$.
\item  If $e_1e_2$ is a directed $2$-path, with both edges $e_1, e_2$ short, then
any directed edge $e$ not incident to their common endpoint crosses $e_1, e_2$
with opposite orientations:  $\chi (e,e_2)=-\chi (e,e_1)$.
\item  Let $p = e_1e_2\ldots e_m$ be a directed path, with all the edges
short, and let $e$ be a directed edge none of whose endpoints is a common endpoint
of two adjacent edges of $p$. Then $\chi (e, e_m)=(-1)^{m-1} \chi (e, e_1)$.
\end{enumerate}
\end{lemma}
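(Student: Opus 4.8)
It is natural to prove the parts in the order (a), (b), (c), (d), (e), since (b) uses (a), (c) uses (a) and (b), and (e) is a one-line induction on top of (d).

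\emph{Part (a).} Suppose $e$ and $f$ are adjacent long edges sharing a vertex $v$. Moving along $e$ away from $v$, one reaches the antipode $v^{*}$ of $v$ after arc length $\pi$, hence strictly before the far endpoint of $e$ since $|e|>\pi$; so $v^{*}\in\operatorname{int}(e)$, and by the same token $v^{*}\in\operatorname{int}(f)$. But then $e$ and $f$ meet both at $v$ and at $v^{*}\neq v$, contradicting the thrackle condition. \emph{Part (b).} Let $e$ have endpoints $a,b$ and let $\mathcal H^{\pm}$ be the two open hemispheres bounded by ${\C}(e)$. By (a), every edge adjacent to $e$ is short; and a short edge with an endpoint $p\in{\C}(e)$ issuing into $\mathcal H^{+}$ lies in $\overline{\mathcal H^{+}}$, because its great circle meets ${\C}(e)$ only at $p$ and its antipode, which is too far from $p$ to lie on a short arc. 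Since $e$ is long it does not separate at $a$ (by Lemma \ref{sel} a separating edge would be short), so all edges at $a$ other than $e$ issue into a single hemisphere — vacuously if $\deg(a)=2$ — and hence lie in a single closed hemisphere $\overline{\mathcal H_{a}}$; likewise for $b$ and $\overline{\mathcal H_{b}}$. To see $\mathcal H_{a}=\mathcal H_{b}$, pick $f\neq e$ at $a$ and $g\neq e$ at $b$ (these exist as $G$ has no terminal edges). If $f$ and $g$ are non-adjacent, they meet in a single point $q$; were $\mathcal H_{a}\neq\mathcal H_{b}$ we would have $q\in\overline{\mathcal H_{a}}\cap\overline{\mathcal H_{b}}={\C}(e)$, forcing $q=a$ (as $f$ meets ${\C}(e)$ only at $a$) and hence $g$ to pass through the vertex $a$ — impossible. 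If $f$ and $g$ are adjacent they share a far endpoint $c$, so $a,b,c$ span a $3$-cycle, and $\mathcal H_{a}\neq\mathcal H_{b}$ would give $c\in{\C}(e)$, making $f=ac$ lie on ${\C}(e)$ or have length $\pi$, against general position. Thus $\mathcal H_{a}=\mathcal H_{b}$, and every edge adjacent to $e$ lies in this open hemisphere.

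\emph{Part (c).} By (a) the edges $e_{1},e_{3}$ are short, and by (b) they lie in a single open hemisphere $\mathcal H$ bounded by ${\C}(e_{2})$. Write $v$ for the endpoint $e_{1}$ shares with $e_{2}$, $w$ for the endpoint $e_{3}$ shares with $e_{2}$, and $q$ for the unique common point of $e_{1}$ and $e_{3}$ (a proper crossing, or their common endpoint if $e_{1}e_{2}e_{3}$ closes a triangle); then $v,w\in{\C}(e_{2})$ while $q\in\mathcal H$ (if $q$ lay on ${\C}(e_{2})$, the edge $e_{1}$ would have both endpoints on ${\C}(e_{2})$ and so would lie on ${\C}(e_{2})$ or have length $\pi$, against general position). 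I would then compute the crossing orientations in coordinates in which ${\C}(e_{2})$ is the equator, $\mathcal H$ is the northern hemisphere, $v$ has longitude $0$, and $e_{2}$ runs from $v$ to its other endpoint $w$ with increasing longitude; as $e_{2}$ is long, $w$ has longitude in $(\pi,2\pi)$. From the cross-product definition of $\chi$, a short computation expresses $\chi(e_{1},e_{3})$ as the sign of the scalar triple product $(v\times w)\cdot q$, up to a fixed overall sign coming from the chosen convention: since $v$ and $w$ are equatorial, $v\times w$ is vertical, and it points into the southern hemisphere precisely because $w$ has longitude in $(\pi,2\pi)$, so the sign is determined by $q\in\mathcal H$. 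A parallel computation of $\chi(e_{1},e_{2})$ at $v$, which uses only that $e_{1}$ reaches $v$ from within $\mathcal H$, gives the same value. Hence $\chi(e_{1},e_{2})=\chi(e_{1},e_{3})$.

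\emph{Part (d).} Let $v$ be the common endpoint of $e_{1},e_{2}$ and write $e_{1}=uv$, $e_{2}=vw$. The thrackle condition makes $e_{1}\cup e_{2}$ a simple arc, and $e$ meets it in exactly two points $p_{1}\in e_{1}$, $p_{2}\in e_{2}$ (distinct, since $p_{1}=p_{2}$ would force $e$ through $v$). Since the complement of the arc $e$ in the sphere is an open disk, I can close $e_{1}\cup e_{2}$ up to a simple closed curve $J$ by adjoining a simple arc from $w$ to $u$ with $e\cap J=\{p_{1},p_{2}\}$. Orient $J$ in the sense $u\to v\to w\to u$; this restricts to the given directions of $e_{1}$ and $e_{2}$, so the side of $J$ lying to the left of $e_{1}$ at $p_{1}$ is the same as the side lying to the left of $e_{2}$ at $p_{2}$. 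As the arc $e$ meets the Jordan curve $J$ in just two points, it passes from one complementary disk into the other and back, so the two crossings carry opposite orientation; that is, $\chi(e,e_{1})=-\chi(e,e_{2})$ (should $e$ be incident to $u$ or to $w$, the corresponding $p_{i}$ is a vertex of $J$, with no change to the argument). \emph{Part (e)} is then immediate by induction on $m$: the case $m=1$ is trivial, and the inductive step applies (d) to the short $2$-path $e_{m-1}e_{m}$, whose common endpoint, being a common endpoint of two adjacent edges of $p$, is not an endpoint of $e$; this gives $\chi(e,e_{m})=-\chi(e,e_{m-1})=-(-1)^{m-2}\chi(e,e_{1})=(-1)^{m-1}\chi(e,e_{1})$.

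The step I expect to be the real obstacle is (c). Parts (a), (b), (d), (e) are short once one has the right picture, whereas in (c) the crossing orientations must be pinned down exactly; the cleanest route I see is the explicit coordinate computation above, and the care there goes into handling the sign conventions in the definition of $\chi$ correctly and into using precisely what (a) and (b) supply — that $e_{1}$, $e_{3}$ and their crossing point all lie strictly on one side of ${\C}(e_{2})$, and that $e_{2}$ wraps more than halfway around its great circle.
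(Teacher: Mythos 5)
Parts (a), (b), (c) and (e) of your proposal are sound. Your (a) makes precise the paper's ``trivial'' remark via the antipode; your (b) is a fleshed-out version of the paper's argument (edges at the two endpoints of $e$ must pairwise intersect, which forces a common hemisphere); and your coordinate computation for (c) is the right idea and correctly isolates where the longness of $e_2$ enters (the far endpoint $w$ at longitude in $(\pi,2\pi)$ flips the sign of $(v\times w)\cdot q$), which is more than the paper offers, since it simply declares (c) obvious.

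Part (d), however, has a genuine gap, and it sits exactly where the hypothesis must be used. You justify the existence of the closing arc $\beta$ from $w$ to $u$ only by noting that the complement of $e$ is an open disk; but for $J=e_1\cup e_2\cup\beta$ to be a \emph{simple} closed curve, $\beta$ must also avoid the interiors of $e_1$ and $e_2$. An Euler-characteristic count shows that $S^2\setminus(e\cup e_1\cup e_2)$ has exactly two components, and $u$ and $w$ are each accessible from exactly one of them; whether it is the \emph{same} one is equivalent to the two crossings of $e$ with $e_1\cup e_2$ having opposite signs --- i.e., to the very conclusion you are proving. (For general simple arcs meeting twice, both configurations occur: an arc that crosses a path twice with the same sign separates the two free ends of the path.) Tellingly, your argument for (d) never uses that $e_1,e_2$ are short. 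The fix is the paper's one-line observation: a short edge meets the great circle ${\C}(e)$ in at most one point (it cannot contain two antipodal points), so $e_1$ and $e_2$ each cross ${\C}(e)$ exactly once, at $e_1\cap e$ and $e_2\cap e$ respectively; hence the common vertex $v$ lies strictly on one side of ${\C}(e)$, $e_1$ crosses ${\C}(e)$ \emph{towards} that side while $e_2$ crosses \emph{away} from it, and the two orientations are opposite. (Equivalently, this shows $u$ and $w$ lie in the same open hemisphere bounded by ${\C}(e)$, which is what legitimises your closing arc.) With (d) repaired, your induction for (e) is fine.
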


\begin{proof} (a) is  trivial, as two adjacent long edges would have two common
points.

(b) Two edges that are adjacent to $e$ at different endpoints of $e$ are short by (a) and lie in the
same hemisphere bounded by ${\C}(e)$ since otherwise they would have no points in common.
The assertion follows. Part (c) is obvious.

(d) follows from the fact that both edges $e_1, e_2$ are short, hence each cross or meet the
great circle ${\C}(e)$ exactly once.
Part (e) follows from (d) by induction.
\end{proof}

\begin{lemma}\label{gpl} Let $e_0e_1e_2\dots e_{m-1}e_{m}e_{m+1}$ be a simple good path with $e_1, e_m$ long and all other edges short. Then $m$ is odd; that is, the long edges are separated by an odd number of short edges.
\end{lemma}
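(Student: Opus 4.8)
The plan is to track how the endpoints of the short edges $e_2,\dots,e_{m-1}$ of the path sit relative to the great circle $\mathcal{C}(e_1)$ — or better, relative to the two long edges — and to derive a parity condition by combining the crossing information from Lemma~\ref{l:2long}(e) with the geometric constraint that the long edges must themselves cross. I would direct the path in order of increasing index. Since the path is good, all consecutive crossings $\chi(e_i,e_{i+1})$ have the same sign $\varepsilon\in\{+1,-1\}$; fix this $\varepsilon$.

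**Using the short sub-path and Lemma~\ref{l:2long}.**

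First I would apply Lemma~\ref{l:2long}(b) to the long edge $e_1$: all edges adjacent to $e_1$ — in particular $e_2$ — lie in one open hemisphere $\mathcal{H}$ bounded by $\mathcal{C}(e_1)$, and likewise $e_0$ lies in $\mathcal{H}$ (or has an endpoint on $\mathcal{C}(e_1)$, namely the shared vertex; since the path is simple and has no terminal issues here, this is the generic picture). Now consider the short sub-path $e_2e_3\dots e_{m-1}$. The edge $e_1$ is a directed edge none of whose endpoints is a common endpoint of two adjacent edges of this sub-path — here simplicity of the path is exactly what guarantees that $e_1$ does not pass through any of the internal vertices of the sub-path, which is the hypothesis of part~(e). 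Hence $\chi(e_1,e_{m-1}) = (-1)^{m-3}\chi(e_1,e_2) = (-1)^{m-1}\chi(e_1,e_2)$. Since $e_2$ and $e_{m-1}$ are both short and cross $\mathcal{C}(e_1)$ (when $m-1>2$; if $m=3$ the statement is about $e_2$ itself and the argument degenerates but still works), the sign of $\chi(e_1,\cdot)$ records on which side of $\mathcal{C}(e_1)$ that edge emanates. More precisely, $e_2$ starts at the shared vertex of $e_1,e_2$ and heads into $\mathcal{H}$; tracking parity of crossings with $\mathcal{C}(e_1)$ along $e_2,\dots,e_{m-1}$ shows that $e_{m-1}$ lies in $\mathcal{H}$ iff $m$ is odd and in $-\mathcal{H}$ iff $m$ is even.

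**Closing with the long edge $e_m$.**

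Finally I would bring in the second long edge $e_m$. By Lemma~\ref{l:2long}(a) its neighbour $e_{m-1}$ is short, and by part~(b) applied to $e_m$, the edge $e_{m-1}$ lies in one hemisphere bounded by $\mathcal{C}(e_m)$; more importantly, $e_m$ and $e_1$ must cross each other (they are edges of a thrackle drawing, not adjacent by~(a)), so $e_m$ meets $\mathcal{C}(e_1)$. Since $e_m$ is long, it crosses $\mathcal{C}(e_1)$ in two points, so it has arcs in both $\mathcal{H}$ and $-\mathcal{H}$; the relevant fact is where its starting endpoint — the vertex shared with $e_{m-1}$ — sits. That vertex is an endpoint of $e_{m-1}$, so it is determined (up to the boundary case) by the side of $\mathcal{C}(e_1)$ on which $e_{m-1}$ lies, which by the previous paragraph is $\mathcal{H}$ iff $m$ is odd. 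I then need one more ingredient to force the conclusion: the edge $e_0$ adjacent to $e_1$ at the opposite endpoint, which lies in $\mathcal{H}$ by~(b), must cross $e_m$; and $e_{m+1}$, adjacent to $e_m$, constrains the far endpoint of $e_m$. Chasing which hemisphere the two endpoints of the long edge $e_m$ occupy and comparing with the constraint that $e_0$ (in $\mathcal{H}$) reaches $e_m$ and that $e_m$ is long forces $m$ to be odd.

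**The main obstacle.**

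The delicate point — and the step I expect to absorb most of the work — is the passage from "crossing orientation with $\mathcal{C}(e_1)$" to "which hemisphere the endpoint lies in," carried out uniformly for all the short edges and then matched at both ends against the two long edges. The signs have to be tracked through Lemma~\ref{l:2long}(e) and then reconciled with the orientation of the good path, and one must handle the genuinely two-sided behaviour of the long edges (each crosses $\mathcal{C}(e_1)$ twice, so "which side is $e_m$ on" only makes sense for the arc near a given vertex). Getting the boundary cases $m=3$ and the endpoint-on-$\mathcal{C}(e_1)$ situations right, without a sign error, is where care is needed; the conceptual content, however, is just the parity bookkeeping sketched above.
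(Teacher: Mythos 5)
Your setup is sound as far as it goes: directing the path, putting $e_0,e_2$ in the open hemisphere $\mathcal H$ bounded by $\C(e_1)$ via Lemma~\ref{l:2long}(b), and using Lemma~\ref{l:2long}(e) on the all-short sub-path $e_2\dots e_{m-1}$ to conclude that the endpoint of $e_{m-1}$ (i.e.\ the starting vertex of $e_m$) lies in $\mathcal H$ exactly when $m$ is odd. This matches the orientation computations in Step~1 of the paper's proof. (One small inaccuracy: for $3\le i\le m-1$ the edge $e_i$ must \emph{cross} $e_1$, hence crosses $\C(e_1)$; so it does not ``lie in'' a single hemisphere --- only the relevant endpoint does. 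Also, a long edge need not meet $\C(e_1)$ in two points; it contains at least one, not necessarily both, of the two antipodal intersection points.)

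The genuine gap is the closing step. You write that ``chasing which hemisphere the two endpoints of the long edge $e_m$ occupy \dots forces $m$ to be odd,'' but no such chase is carried out, and hemisphere parity alone cannot force it: a long edge visits both hemispheres bounded by $\C(e_1)$ regardless of where its endpoints sit, so knowing that $v_m\in-\mathcal H$ when $m$ is even, that $e_0\subset\mathcal H$ must meet $e_m$, and that $e_{m+1}$ crosses $e_1$ once, produces no contradiction --- all of these constraints are simultaneously satisfiable at the level of hemisphere bookkeeping. The paper's proof needs two further, genuinely geometric ingredients that your sketch does not supply: (i) an argument about the \emph{order} of crossing points along the long edges (on $e_m$ the crossing with $e_2$ comes after the crossing with $e_1$, and dually on $e_1$), deduced from the orientation data; and (ii) the observation that the arcs of $e_2$, $e_m$, $e_{m+1}$ cut off by their mutual crossings form a bad $3$-cycle, so that by Remark~\ref{R:bad3cycle} a specific sub-arc of $e_m$ beyond the point $e_1\cap e_m$ has length greater than $\pi$; the symmetric statement for $e_1$ then forces $e_1$ and $e_m$ to meet twice, which is the actual contradiction. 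Your characterisation of the remaining work as ``just the parity bookkeeping'' understates what is missing: the heart of the lemma is this arc-length/double-intersection argument, not the parity of hemisphere crossings.
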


\begin{proof} Assume $m$ is even and direct the path from $e_0$ to $e_{m+1}$. By Lemma~\ref{l:2long}(a), $m \ge 4$, so
$e_2 \ne e_{m-1}$. Without loss of generality, assume that $\chi (e_i, e_{i+1})=1$ for all $i \le m$.
The proof has three steps.

\emph{Step 1.} We first compute the orientations of some crossings.
By Lemma \ref{l:2long}(c), $\chi (e_0, e_2) = \chi (e_0, e_1) = 1$. Applying
Lemma~\ref{l:2long}(e) to the $(m-2)$-path $e_2 \ldots e_{m-1}$ and the edges $e_0$ and $e_1$ we obtain respectively
$\chi (e_0, e_{m-1}) = (-1)^{m-3}\chi (e_0, e_{2}) = -1$ and
$\chi (e_1, e_{m-1}) = (-1)^{m-3}\chi (e_1, e_{2})= -1$. Similarly,
$\chi (e_{m+1}, e_2) = \chi (e_m, e_2) =1$. Again, by Lemma~\ref{l:2long}(e),
$\chi (e_2, e_{m-1})= \chi (e_2, e_{3})= 1$.

\emph{Step 2.} We claim that on the edge $e_1$, the crossing point $e_1 \cap e_{m-1}$ appears before
the crossing point $e_1 \cap e_m$. Similarly, on the edge $e_m$, the crossing point $e_m \cap e_2$
appears after the crossing point $e_m \cap e_1$.

To see this, we only need the edges $e_1, e_2, e_{m-1}, e_m$ and the information about the orientation of
their crossings obtained in Step 1:
\[
\chi (e_1, e_2)= \chi (e_{m-1}, e_m)=1,\quad \chi (e_1, e_{m-1}) = \chi (e_2, e_m) = -1,\quad \chi (e_2, e_{m-1})=1.
\]
Let $\mathcal{H}$ be the open hemisphere bounded by ${\C}(e_1)$ in which the short
edge $e_2$ lies. Using the orientation of mutual crossings of $e_1,e_2$ and $e_{m-1}$, we find that the
short edge $e_{m-1}$ starts in $\mathcal{H}$ and ends in $\mathcal{-H}$, the antipodal hemisphere. Then
the long edge $e_m$ starts in $\mathcal{-H}$. If it crossed $e_2$ before $e_1$, the orientation
$\chi  (e_m, e_2)$ would be $-1$, which is not the case. Thus, on the edge $e_m$, the crossing with
$e_2$ is after the crossing with $e_1$. Changing the direction of the path and relabelling edges
accordingly, we find that on the edge $e_1$, the crossing with $e_{m-1}$ is before the crossing with
$e_m$.

\emph{Step 3.} Let $v_i$ be the starting point of the edge $e_i$ for $ i =0, \dots, m+1$, and let
$p=e_1 \cap e_m$, $q=e_2 \cap e_{m+1}$ and $r=e_2 \cap e_{m}$. As we found in Step 1, $\chi (e_{m+1}, e_2)= \chi (e_m, e_2)= 1$. Consequently, since $v_{m+1}$
is the starting point of the short edge $e_{m+1}$, the arc $f$ of the edge $e_m$ from $r$ to $v_{m+1}$ is longer than $\pi$. Indeed, if $g$ denotes the arc of $e_2$ travelled in the reverse direction  from $q$ to $r$, and $h$ denotes the arc of $e_{m+1}$ from $v_{m+1}$ to $q$, then the triangle $gfh$ is a bad $3$-cycle and thus $f$ has length greater than $\pi$ by Remark \ref{R:bad3cycle}. Then, by Step 2, the arc $pv_{m+1}$
of the edge $e_m$ is longer than $\pi$. Similarly, the arc $v_1p$ of the edge $e_1$ is longer than
$\pi$. But then the edges $e_m$ and $e_1$ have two points in common, which is a contradiction.
\end{proof}

\begin{lemma}\label{gcy} In every good cycle, consecutive distinct long edges are separated by an odd number of short edges.
In particular, good odd cycles have at most one long edge.
 \end{lemma}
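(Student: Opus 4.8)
The plan is to reduce the first assertion to Lemma~\ref{gpl} and then obtain the second by a parity count around the cycle. Fix a good cycle $c$ and two consecutive distinct long edges $L,L'$; writing $c$ cyclically as $\ldots f\,L\,a_1\cdots a_k\,L'\,g\ldots$, the edges $a_1,\dots,a_k$ lying on one of the two arcs joining $L$ to $L'$ are all short, and the goal is to show $k$ is odd. The predecessor $f$ of $L$ and the successor $g$ of $L'$ are adjacent to long edges, hence short by Lemma~\ref{l:2long}(a), so $P=f\,L\,a_1\cdots a_k\,L'\,g$ is a directed path of precisely the form handled by Lemma~\ref{gpl}, with its two long edges in the second and second-to-last positions. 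If $P$ is a simple path, then Lemma~\ref{gpl} applies with $m=k+2$ and tells us $m$, and hence $k$, is odd.

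It then remains to deal with the case in which $P$ is not simple. Since $P$ is a string of $k+4$ consecutive edges of $c$, this forces either $f=g$ or $P$ to exhaust $c$; in both cases every edge of $c$ outside $\{L,L'\}$ is short, so $c$ has exactly two long edges and the complementary arc $B$ between $L'$ and $L$ has at most two edges. When that second arc has exactly two edges one can run the same argument with the roles of the two arcs exchanged: if the first arc $A$ then has at least three edges this produces a simple flanked path with $B$ in its interior, and Lemma~\ref{gpl} forces $|B|$ odd, contradicting $|B|=2$. What this leaves is a short list of residual configurations — a good $5$-cycle or $6$-cycle with two long edges, and the family of cycles with two long edges one of whose bounding arcs is a single short edge — in none of which is there room to apply Lemma~\ref{gpl}, because no simple sub-path of the required shape has the long arc in its interior. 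I would rule these out by direct inspection of the geometry: choosing one long edge $L$, one tracks which open hemisphere of $\C(L)$ contains each successive vertex of $c$ — every short edge flips the side by crossing $L$, and by Lemma~\ref{l:2long}(b) the two edges of $c$ adjacent to $L$ lie on the same side — and one then uses the crossing orientations at the two long edges supplied by Lemma~\ref{l:2long}(c),(d) to conclude that $L$ and $L'$ must meet in two points, which is absurd. This last step is the one I expect to be delicate and to constitute the real obstacle, precisely because once $c$ is too small to house a simple sub-path of the kind Lemma~\ref{gpl} requires, that lemma gives no purchase on the parity of the long arc.

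Finally, the ``in particular'' statement follows from the first assertion by counting parities: if a good odd $n$-cycle had $t\ge 2$ long edges, these would cut the remaining $n-t$ short edges into $t$ arcs, each of odd length, so $n-t$ would be a sum of $t$ odd numbers, whence $n\equiv 2t\equiv 0\pmod 2$, contradicting the oddness of $n$. Therefore a good odd cycle has at most one long edge.
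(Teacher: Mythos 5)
Your reduction of the generic case to Lemma~\ref{gpl} and your parity count for the ``in particular'' clause both match the paper's argument and are fine. The genuine gap lies in the residual cases, which you explicitly leave unproven: most seriously, the \emph{infinite} family of good cycles $e_0e_1e_2\dots e_m$ in which $e_1$ and $e_m$ are the only long edges and the complementary arc between them is the single short edge $e_0$ (your $f=g$ case). There no simple sub-path of the shape required by Lemma~\ref{gpl} exists, and the hemisphere-tracking argument you sketch is not carried out --- you yourself flag it as ``the real obstacle.'' As stated it is also not obviously repairable: the edges of the cycle adjacent to a long edge $L$ do not cross $\C(L)$ but emanate from its endpoints, a long edge such as $L'$ may meet $\C(L)$ in one \emph{or} two points so it does not have a well-defined ``flip,'' and the claimed conclusion that $L$ and $L'$ meet twice is asserted rather than derived.

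The missing idea is a small surgery that makes Lemma~\ref{gpl} applicable after all: replace the single shared flanking edge $e_0$ by two short edges $e_0',e_0''$, obtained by pulling the two ends of $e_0$ slightly apart into two nearly parallel arcs that cross each other once and, being small perturbations of $e_0$, still meet every other edge exactly once. The cycle becomes the \emph{simple} good path $e_0'e_1e_2\dots e_{m-1}e_me_0''$, and Lemma~\ref{gpl} immediately forces $m$ to be odd. The same perturbation applied at the single shared vertex disposes of your ``$P$ exhausts $c$'' case, so the sporadic $5$- and $6$-cycle configurations on your list never require separate geometric treatment. Your role-swapping observation for the case where the complementary arc has two edges is a correct partial substitute, but without the splitting trick the one-edge family and the small cycles remain open, so the proof as proposed is incomplete.
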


\begin{proof} By Lemma \ref{l:2long}(a), there are no adjacent long edges. If $e_k$ and $e_{\ell}$ are long edges with $k< \ell$ and there are no long edges $e_i$ for $k<i< \ell$, then one can consider the path $e_{k-1}e_k\dots e_{\ell}e_{\ell+1}$. So the first part of the lemma follows immediately from Lemma \ref{gpl} except in the case of a good cycle of the form $e_0e_1e_2\dots e_{m-1}e_{m}$ where $e_1, e_m$ are long and all other edges short. But in this case one can replace $e_0$ by two edges $e'_0,e''_0$, as in Figure \ref{Fsw}, and apply
Lemma \ref{gpl} to the good path $e'_0e_1e_2\dots e_{m-1}e_{m}e''_0$.

\begin{figure}
\begin{tikzpicture}
\draw[thick,dashed] (-1.5,1.5) -- (-1,1);
\draw[thick,dashed] (4.5,1.5) -- (4,1);
\draw[thick,-] (-1,1) -- (0,0) -- (3,0) -- (4,1);
 \draw [fill=black] (0,0) circle (0.08);
 \draw [fill=black] (3,0) circle (0.08);
\draw (1.5,-.3) node {$e_0$};
\draw (4,.6) node {$e_1$};
\draw (-1,.6) node {$e_m$};
\end{tikzpicture}
\hspace{1.5cm}
\begin{tikzpicture}
\draw[thick,dashed] (-1.5,1.5) -- (-1,1);
\draw[thick,dashed] (4.5,1.5) -- (4,1);
\draw[thick,-] (-1,1) -- (0,0) -- (3,-0.5);
\draw[thick,-] (0,-0.5) -- (3,0) -- (4,1);
 \draw [fill=black] (0,-0.5) circle (0.08);
 \draw [fill=black] (3,-0.5) circle (0.08);
 \draw [fill=black] (0,0) circle (0.08);
 \draw [fill=black] (3,0) circle (0.08);
\draw (.6,.15) node {$e''_0$};
\draw (2.3,.15) node {$e'_0$};
\draw (4,.6) node {$e_1$};
\draw (-1,.6) node {$e_m$};
\end{tikzpicture}
\caption{Turning a good cycle into a good path.}\label{Fsw}
\end{figure}
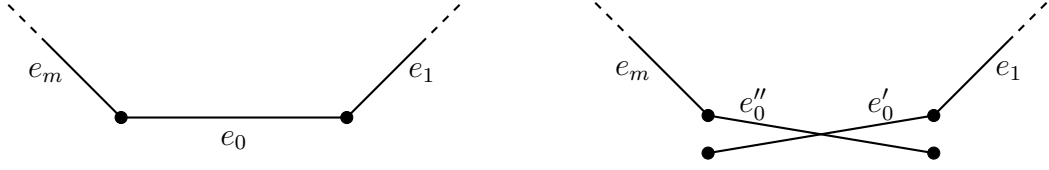

If a good cycle has $n$ long edges with $n\ge2$, then from what we have just seen there is an odd number of short edges between each consecutive pair of long edges. So the total number $t$  of short edges is even if and only if $n$ is even. But then the total number of edges is $t+n$, which is always even.
\end{proof}

\section{Proof of the main result}

Consider a spherical thrackle drawing and suppose, by way of contradiction, that $G$ has more edges than vertices.
We prove Theorem \ref{thcon} in two parts. We first prove the existence of a spherical thrackle drawing of a figure $8$-graph $H$; that is,  $H$ consists of two cycles that only share a vertex. In particular, we will show that $H$ consists of an even cycle and a bad $3$-cycle, with the vertex of degree $4$ opposite the long edge of the bad $3$-cycle. We then prove that no such graph can be drawn as a spherical thrackle.

If $G$ has more edges than vertices, then there must be some vertex in $G$ with degree greater than $2$. By Lemma \ref{corollary}, any such vertex is the vertex of a bad $3$-cycle $c_3$ of $G$. Since $G$ can contain at most one $3$-cycle, only the vertices of $c_3$ can have degree greater than $2$.

Let $c_3=ABC$, with $A$ opposite to the unique long edge $BC$; see Remark \ref{R:bad3cycle}. Let $\mathcal{H}$ be the hemisphere bounded by ${\C}
(BC)$ containing $A$. We consider the three possible cases (up to symmetry):
\begin{enumerate}
\item $\deg(B)>2$ and $\deg(C)>2$.
\item $\deg(B)>2$ and $\deg(C)=2$.
\item $\deg(B)=\deg(C)=2$.
\end{enumerate}

Case (1). As $\deg(B)>2$,  there is some edge $BD$ incident to $B$ with $D\neq A,C$. Since $BC$ is long, $BD$ is necessarily short by Lemma~\ref{l:2long}(a), so $D\in\mathcal{H}$ since $BD$ must cross $AC$. $BD$ does not separate, since otherwise it produces another bad $3$-cycle by Lemma \ref{sel}. If there is some other edge $BD'$ incident to $B$ with $D'\neq A,C,D$, then one of $BD$, $BD'$ would separate, giving another bad $3$-cycle. Hence $\deg(B)=3$. By symmetry, $\deg(C)=3$, with some short edge $CE$ incident to $C$ with $E\in\mathcal{H}$. We must then have $\deg(A)=2$, since if there were some edge $AF$ incident to $A$ with $F\neq B,C$ then $AF$ cannot separate $AB$, $AC$, or it produces another bad $3$-cycle, but this would imply that $AF$ has no points in common with one of $BD$, $CE$.

This brings us  to the drawing on the left in Figure \ref{F5}, from which we can obtain the structure shown on the right in Figure \ref{F5} by edge insertion. Since all other vertices have degree $2$, we have a cycle sharing the vertex $A$ with a $3$-cycle. Since the intersection of the two cycles is a touching intersection, the other cycle must be even by \cite{LPS}.

\begin{figure}
\begin{tikzpicture}
\draw [thick] (0,0) circle (3);
\draw [thick,style=dashed] (-3,0) arc (180:0:3 and 0.7);
\draw [thick] (-3,0) arc (180:285:3 and 0.7);
\draw [thick] (3,0) arc (360:315:3 and 0.7);
\draw [thick] (-13.2:0.8 and 3) arc (-13.2:72:0.8 and 3);
\draw [thick,rotate around={12:(0,0)}] (-18.2:2.07 and 3) arc (-18.2:66:2.07 and 3);
\draw [thick,rotate around={50:(0,0)}] (269.5:3 and 1.03) arc (269.5:340:3 and 1.03);
\draw [thick,rotate around={-18.65:(0,0)}] (44:3 and 0.3) arc (44:98:3 and 0.3);
\draw [fill=black] (0.26,2.85) circle (0.08);
\draw (-0.05,2.8) node {$A$};
\draw [fill=black] (0.78,-0.665) circle (0.08);
\draw (1,-0.9) node {$C$};
\draw [fill=black] (2.117,-0.486) circle (0.08);
\draw (2.05,-0.8) node {$B$};
\draw [fill=black] (2.084,1.93) circle (0.08);
\draw (2.25,1.6) node {$E$};
\draw [fill=black] (-0.293,0.41) circle (0.08);
\draw (-0.5,0.1) node {$D$};
\end{tikzpicture}
\hspace{1.5cm}
\begin{tikzpicture}
\draw [thick] (0,0) circle (3);
\draw [thick,style=dashed] (-3,0) arc (180:0:3 and 0.7);
\draw [thick] (-3,0) arc (180:285:3 and 0.7);
\draw [thick] (3,0) arc (360:315:3 and 0.7);
\draw [thick] (-13.2:0.8 and 3) arc (-13.2:72:0.8 and 3);
\draw [thick,rotate around={12:(0,0)}] (-18.2:2.07 and 3) arc (-18.2:66:2.07 and 3);
\draw [thick,rotate around={46.7:(0,0)}] (250:3 and 0.6) arc (250:342:3 and 0.6);
\draw [thick,rotate around={-6.15:(0,0)}] (108:0.175 and 3) arc (108:202.7:0.175 and 3);
\draw [thick,rotate around={-12.8:(0,0)}] (21:3 and 0.33) arc (21:100:3 and 0.33);
\draw [thick,rotate around={32:(0,0)}] (-40:2.66 and 3) arc (-40:50:2.66 and 3);
\draw [fill=black] (0.26,2.85) circle (0.08);
\draw (-0.05,2.8) node {$A$};
\draw [fill=black] (0.78,-0.665) circle (0.08);
\draw (1,-0.9) node {$C$};
\draw [fill=black] (-0.285,-1.13) circle (0.08);
\draw (0.05,-1.2) node {$C'$};
\draw [fill=black] (2.117,-0.486) circle (0.08);
\draw (2.05,-0.8) node {$B$};
\draw [fill=black] (2.75,-0.5) circle (0.08);
\draw (2.6,-0.85) node {$B'$};
\draw [fill=black] (2.084,1.93) circle (0.08);
\draw (2.4,2.2) node {$E$};
\draw [fill=black] (-0.425,0.432) circle (0.08);
\draw (-0.6,0.11) node {$D$};
\end{tikzpicture}

\caption{The  drawing on the right is obtained from the drawing on the left.}\label{F5}
\end{figure}

Case (2). As in case (1), $\deg(B)=3$, with some short edge $BD$ incident to $B$ crossing $AC$. Since $\deg(C)=2$, and all vertices other than the vertices of $c_3$ have degree $2$, we must have $\deg(A)$ odd so that the degree sum of $G$ is even (which is true of any graph).  Hence, $\deg(A)=3$, since this is the only possible odd degree by Lemma \ref{corollary}. The remaining edge $AF$ (with $F\neq B,C$) incident to $A$ must intersect $BD$ and $BC$, and cannot separate $AB$, $AC$, so we get the drawing on the left in Figure \ref{F6}. By edge insertion as shown on the right in Figure \ref{F6}, we again obtain a figure $8$-graph consisting of an even cycle and a $3$-cycle.

\begin{figure}
\begin{tikzpicture}
\draw [thick] (0,0) circle (3);
\draw [thick,style=dashed] (-3,0) arc (180:0:3 and 0.7);
\draw [thick] (-3,0) arc (180:285:3 and 0.7);
\draw [thick] (3,0) arc (360:315:3 and 0.7);
\draw [thick] (-13.2:0.8 and 3) arc (-13.2:72:0.8 and 3);
\draw [thick,rotate around={12:(0,0)}] (-18.2:2.07 and 3) arc (-18.2:66:2.07 and 3);
\draw [thick,rotate around={-18.65:(0,0)}] (44:3 and 0.3) arc (44:98:3 and 0.3);
\draw [thick,rotate around={-6.15:(0,0)}] (108:0.175 and 3) arc (108:202.7:0.175 and 3);
\draw [fill=black] (0.26,2.85) circle (0.08);
\draw (-0.05,2.8) node {$A$};
\draw [fill=black] (0.78,-0.665) circle (0.08);
\draw (1,-0.9) node {$C$};
\draw [fill=black] (2.117,-0.486) circle (0.08);
\draw (2.05,-0.8) node {$B$};
\draw [fill=black] (-0.293,0.41) circle (0.08);
\draw (-0.5,0.1) node {$D$};
\draw [fill=black] (-0.285,-1.13) circle (0.08);
\draw (0.05,-1.2) node {$F$};

\end{tikzpicture}
\hspace{1.5cm}
\begin{tikzpicture}
\draw [thick] (0,0) circle (3);
\draw [thick,style=dashed] (-3,0) arc (180:0:3 and 0.7);
\draw [thick] (-3,0) arc (180:285:3 and 0.7);
\draw [thick] (3,0) arc (360:315:3 and 0.7);
\draw [thick] (-13.2:0.8 and 3) arc (-13.2:72:0.8 and 3);
\draw [thick,rotate around={12:(0,0)}] (-18.2:2.07 and 3) arc (-18.2:66:2.07 and 3);
\draw [thick,rotate around={-6.15:(0,0)}] (108:0.175 and 3) arc (108:202.7:0.175 and 3);
\draw [thick,rotate around={-12.8:(0,0)}] (21:3 and 0.33) arc (21:100:3 and 0.33);
\draw [thick,rotate around={32:(0,0)}] (-40:2.66 and 3) arc (-40:50:2.66 and 3);
\draw [fill=black] (0.26,2.85) circle (0.08);
\draw (-0.05,2.8) node {$A$};
\draw [fill=black] (0.78,-0.665) circle (0.08);
\draw (1,-0.9) node {$C$};
\draw [fill=black] (-0.285,-1.13) circle (0.08);
\draw (0.05,-1.2) node {$F$};
\draw [fill=black] (2.117,-0.486) circle (0.08);
\draw (2.05,-0.8) node {$B$};
\draw [fill=black] (2.75,-0.5) circle (0.08);
\draw (2.6,-0.85) node {$B'$};
\draw [fill=black] (-0.425,0.432) circle (0.08);
\draw (-0.6,0.11) node {$D$};
\end{tikzpicture}

\caption{The drawing on the right is obtained from the drawing on the left.}\label{F6}
\end{figure}

Case (3). Since $A$ is the only vertex with degree greater than $2$, and the degree sum must be even, we have $\deg(A)=4$ by Lemma \ref{corollary}. The other two edges $AF$, $AF'$ must both cross $BC$, and since neither of them can separate at $A$, we get (without loss of generality) the structure shown in Figure \ref{F7}. We again have an even cycle sharing a vertex with a $3$-cycle.

\begin{figure}
\begin{tikzpicture}
\draw [thick] (0,0) circle (3);
\draw [thick,style=dashed] (-3,0) arc (180:0:3 and 0.7);
\draw [thick] (-3,0) arc (180:285:3 and 0.7);
\draw [thick] (3,0) arc (360:315:3 and 0.7);
\draw [thick] (-13.2:0.8 and 3) arc (-13.2:72:0.8 and 3);
\draw [thick,rotate around={12:(0,0)}] (-18.2:2.07 and 3) arc (-18.2:66:2.07 and 3);
\draw [thick,rotate around={19:(0,0)}] (-46:2.4 and 3) arc (-46:61:2.4 and 3);
\draw [thick,rotate around={-6.15:(0,0)}] (108:0.175 and 3) arc (108:202.7:0.175 and 3);
\draw [fill=black] (0.26,2.85) circle (0.08);
\draw (-0.05,2.8) node {$A$};
\draw [fill=black] (0.78,-0.665) circle (0.08);
\draw (1,-0.9) node {$C$};
\draw [fill=black] (-0.285,-1.13) circle (0.08);
\draw (0.05,-1.2) node {$F'$};
\draw [fill=black] (2.117,-0.486) circle (0.08);
\draw (2.05,-0.8) node {$B$};
\draw [fill=black] (2.28,-1.49) circle (0.08);
\draw (1.95,-1.6) node {$F$};
\end{tikzpicture}

\caption{Case (3) gives another figure $8$-graph.}\label{F7}
\end{figure}
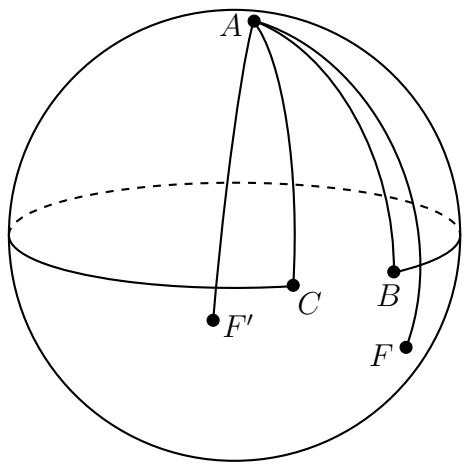

This completes the first part of the proof; in each case we have obtained a figure $8$-graph $H$ consisting of an even cycle and a $3$-cycle, with the vertex of degree $4$ opposite the long edge of the bad $3$-cycle. Now we show that such a graph cannot be drawn in this way.

Retain the labelling of the bad $3$-cycle with vertices $A$, $B$, $C$, with $A$ opposite the long edge $BC$. Let $c_{2n}=e_1\dots e_{2n}$ be the even cycle. Note that $c_{2n}$ is good by Theorem \ref{cth}. Orient the direction on $c_{2n}$ in order of increasing edge index with $e_1$ starting at $A$, and without loss of generality, assume that $\chi(e_{i-1},e_{i})=1$ for each $i=2,\dots,2n$, and $\chi(e_{2n},e_{1})=1$. By swapping the labels of $B$ and $C$ if necessary, assume also that $\chi (AB,BC)=\chi( BC,CA)=1$. Since neither $e_1$ nor $e_{2n}$ can separate at $A$, we then get the drawing on left hand side of Figure~\ref{F8}. Note that necessarily $\chi(e_{2n},AB)=\chi (CA,e_1)=1$.
 In particular, the path $ABCAe_1\dots e_{2n}$ is good.
Note that  $ABCAe_1\dots e_{2n}$ can be made simple by introducing a new vertex $A'$ and disconnecting the edges $CA$ and $e_{1}$ from $A$, as shown in the right hand side of Figure~\ref{F8}. This turns the figure $8$-graph $H$ into the good $(2n+3)$-cycle $ABCA'e'_1e_2\dots e_{2n}$.
By Theorem \ref{cth}, $c_{2n}$ contains at least one long edge. So, since $BC$ is long, the good odd cycle $ABCA'e'_1e_2\dots e_{2n}$ has at least two long edges. But this is impossible by Lemma \ref{gcy}.
This completes the proof of Theorem \ref{thcon}.

\begin{figure}
\begin{tikzpicture}
\draw [thick] (0,0) circle (3);
\draw [thick,style=dashed] (-3,0) arc (180:0:3 and 0.7);
\draw [thick] (-3,0) arc (180:285:3 and 0.7);
\draw [thick] (3,0) arc (360:315:3 and 0.7);
\draw [thick] (-13.2:0.8 and 3) arc (-13.2:72:0.8 and 3);
\draw [thick,rotate around={12:(0,0)}] (-18.2:2.07 and 3) arc (-18.2:66:2.07 and 3);
\draw [thick,rotate around={19:(0,0)}] (-46:2.4 and 3) arc (-46:61:2.4 and 3);
\draw [thick,rotate around={-6.15:(0,0)}] (108:0.175 and 3) arc (108:202.7:0.175 and 3);
\draw [fill=black] (0.26,2.85) circle (0.08);
\draw (-0.05,2.8) node {$A$};
\draw [fill=black] (0.78,-0.665) circle (0.08);
\draw (1,-0.9) node {$C$};
\draw [fill=black] (-0.285,-1.13) circle (0.08);
\draw (-0.3,1.3) node {$e_{1}$};
\draw [fill=black] (2.117,-0.486) circle (0.08);
\draw (2.05,-0.8) node {$B$};
\draw [fill=black] (2.28,-1.49) circle (0.08);
\draw (2.35,1.4) node {$e_{2n}$};
\end{tikzpicture}
\hspace{1.5cm}
\begin{tikzpicture}
\draw [thick] (0,0) circle (3);
\draw [thick,style=dashed] (-3,0) arc (180:0:3 and 0.7);
\draw [thick] (-3,0) arc (180:285:3 and 0.7);
\draw [thick] (3,0) arc (360:315:3 and 0.7);
\draw [thick] (-13.2:0.8 and 3) arc (-23.2:52:0.8 and 3);
\draw [thick,rotate around={19:(0,0)}] (-24.2:2.07 and 2.7) arc (-20.2:66:2.07 and 3);
\draw [thick,rotate around={19:(0,0)}] (-50:2.4 and 3) arc (-46:61:2.4 and 3);
\draw [thick,rotate around={-11.15:(0,0)}] (100:0.175 and 3) arc (108:202.7:0.175 and 3);
\draw [fill=black] (-0.04,2.75) circle (0.08);
\draw [fill=black] (0.55,2.85) circle (0.08);
\draw (-0.35,2.8) node {$A$};
\draw (0.88,2.64) node {$A'$};
\draw [fill=black] (0.78,-0.665) circle (0.08);
\draw (1,-0.9) node {$C$};
\draw [fill=black] (-0.365,-1.03) circle (0.08);
\draw (-0.15,1.3) node {$e'_{1}$};
\draw [fill=black] (2.117,-0.486) circle (0.08);
\draw (2.05,-0.8) node {$B$};
\draw [fill=black] (2.23,-1.62) circle (0.08);
\draw (2.25,1.4) node {$e_{2n}$};
\end{tikzpicture}

\caption{Turning a figure $8$-graph into a cycle.}\label{F8}
\end{figure}
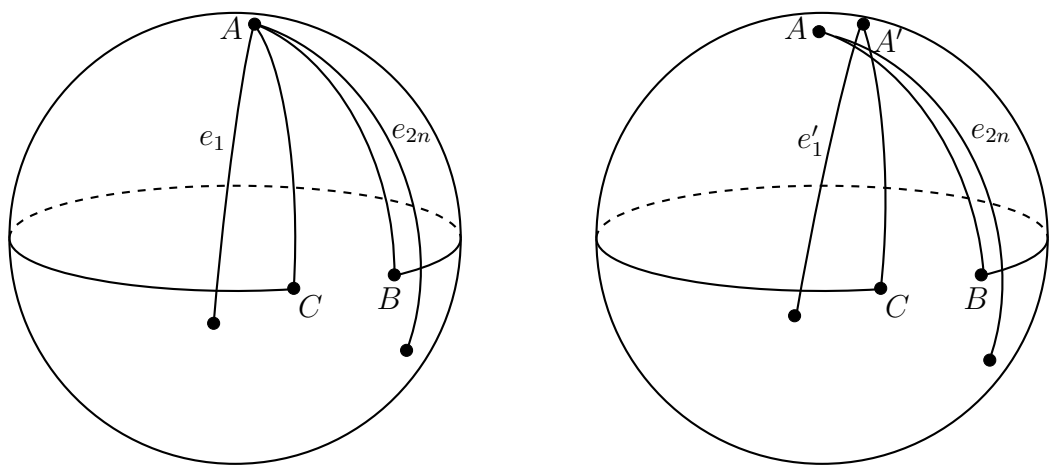

\bibliographystyle{amsplain}
\bibliography{sp}

\end{document}